\theoremstyle{definition}
\newtheorem{theorem}{Theorem}[section]
\newtheorem{lemma}[theorem]{Lemma}
\newtheorem{corolary}[theorem]{Corolary}
\newtheorem{proposition}[theorem]{Proposition}
\begin{document}
\title{The Cantor's First Diagonal Formalized and Extended}
\author{Jo\~{a}o Alves Silva J\'{u}nior}
\date{\today}

\maketitle

\section{Introduction}\label{S:Intro}

Diagrams like that in Figure~\ref{F:CantDiag} are often presented as proof that $\mathbb{N}\times\mathbb{N}$ and $\mathbb{N}$ have the same cardinality (see \cite[p.~8]{BBJ07}, \cite[p.~76]{HJ99} and \cite[p.~10]{YM07}). The arrows indicate the growth direction of a function $\Psi: \mathbb{N}\times\mathbb{N} \to \mathbb{N}$ that intuitively covers, without repetitions, all the elements of $\mathbb{N}$. So, $\Psi$ is a bijection between $\mathbb{N}\times\mathbb{N}$ and $\mathbb{N}$. This is the idea known as Cantor's first diagonal. None of these texts formally defined such function $\Psi$, neither rigorously proved that $\Psi$ is a bijection between $\mathbb{N}\times\mathbb{N}$ and $\mathbb{N}$. The purpose of this paper is to fill these gaps in a more general context, finding a simple closed-form expression for a bijection between $\mathbb{N}^k$ and $\mathbb{N}$, where $k$ is an arbitrary positive integer.

\begin{figure}[here]\label{F:CantDiag}
\begin{center}
\includegraphics[scale=1]{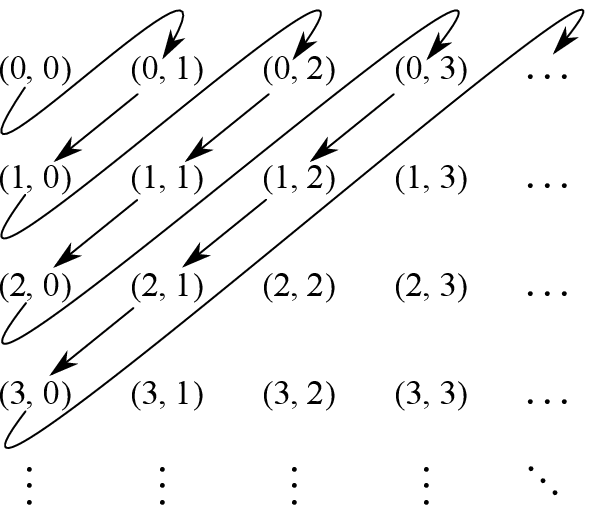}
\end{center}
\caption{Cantor's first diagonal.}
\end{figure}

Given $r \in \mathbb{N}$, let $P_r$ be the set $\{ i \in \mathbb{N} \mid 1 \leq i \leq r\}$. We denote arbitrary elements of $\mathbb{N}^k$ using bold letters like $\boldsymbol{m}$, $\boldsymbol{n}$ and $\boldsymbol{p}$. The $i$-th coordinate of a $\boldsymbol{m} \in \mathbb{N}^k$ is denoted by $m_i$. So, $\boldsymbol{m} = \boldsymbol{n}$ if and only if $m_i = n_i$ for all $i \in P_k$.

Let $\mathrm{Diff}(\boldsymbol{m}, \boldsymbol{n})$ denote the set of all indices $i \in \mathbb{N}-\{0\}$ such that $m_i$ and $n_i$ are defined and $m_i \neq n_i$, for all $\boldsymbol{m}, \boldsymbol{n} \in \bigcup_{r = 1}^\infty \mathbb{N}^r$. A natural way of ordering elements of $\mathbb{N}^k$ is given by the \emph{lexicografic order} $<_k$, defined by
\begin{equation}\label{E:DefLexOrd}
\boldsymbol{m} <_k \boldsymbol{n} \quad \Leftrightarrow \quad \exists i \in P_k, \ m_i < n_i \text{ and } i = \min \mathrm{Diff}(\boldsymbol{m}, \boldsymbol{n}).
\end{equation}
We are specially interested in considering this relation on the subset
\begin{equation}\label{E:DefDk}
 \mathcal{D}_k = \{ \boldsymbol{n} \in \mathbb{N}^k \mid n_1 \geq \dots \geq n_k \},
\end{equation}
which is clearly equipotent to $\mathbb{N}^k$ via
\begin{align}\label{E:Defhk}
h_k: \mathcal{D}_k &\to \mathbb{N}^k\\
  \boldsymbol{n} &\mapsto (n_k, n_{k-1} - n_k, n_{k-2} - n_{k-1}, \dots, n_1 - n_2) \notag.
\end{align}
Note in Figure \ref{F:CantDiag} that there is an arrow from $(m_1, m_2)$ to $(n_1, n_2)$ if and only if $h_2^{-1}(m_1, m_2)$ immediately precedes $h_2^{-1}(n_1, n_2)$ in $(\mathcal{D}_2, <_2)$. Our approach is based on a generalization of this idea for $\mathbb{N}^k$.

\section{Two inverse bijections between $\mathbb{N}^k$ and $\mathbb{N}$}

\begin{proposition}\label{T:LexOrd}
$(\mathcal{D}_k, <_k)$ is a well-ordered set, without maximum, such that every nonempty subset of $\mathcal{D}_k$ with an upper bound has a $<_k$-maximum.
\end{proposition}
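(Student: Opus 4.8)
The plan is to verify the three assertions separately, all resting on the observation that $<_k$ is a strict linear order on $\mathbb{N}^k$ (hence on $\mathcal{D}_k$): for distinct $\boldsymbol{m}, \boldsymbol{n}$ the set $\mathrm{Diff}(\boldsymbol{m},\boldsymbol{n})$ is a nonempty finite set of indices, so $i=\min\mathrm{Diff}(\boldsymbol{m},\boldsymbol{n})$ is well defined and exactly one of $m_i<n_i$, $m_i>n_i$ holds, giving trichotomy; transitivity follows by comparing first points of disagreement. The core technique for both the minimum and the maximum claims is the same finite ``coordinate peeling'': fix coordinates one at a time, from the first to the $k$-th, at each stage restricting to the sub-collection realizing an extremal value of the current coordinate. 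Because $k$ is finite, this terminates after $k$ steps at a single element.

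For well-ordering, let $S\subseteq\mathcal{D}_k$ be nonempty. Set $a_1=\min\{m_1 : \boldsymbol{m}\in S\}$, which exists since $\mathbb{N}$ is well-ordered, and put $S_1=\{\boldsymbol{m}\in S : m_1=a_1\}$. Iterating, define $a_{j}=\min\{m_{j} : \boldsymbol{m}\in S_{j-1}\}$ and $S_{j}=\{\boldsymbol{m}\in S_{j-1} : m_{j}=a_{j}\}$ for $j=2,\dots,k$. Each $S_j$ is nonempty, and $S_k$ contains exactly the element $\boldsymbol{a}=(a_1,\dots,a_k)$, all of whose coordinates are pinned down. I would then check that $\boldsymbol{a}=\min S$: for any $\boldsymbol{m}\in S$ with $\boldsymbol{m}\neq\boldsymbol{a}$, the construction forces $m_j=a_j$ for every $j$ below the first index of disagreement and $a_j<m_j$ there, whence $\boldsymbol{a}<_k\boldsymbol{m}$. (Membership $\boldsymbol{a}\in\mathcal{D}_k$ is automatic, since $\boldsymbol{a}\in S_k\subseteq S$.)

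Absence of a maximum is immediate: given $\boldsymbol{n}\in\mathcal{D}_k$, the point $\boldsymbol{p}=(n_1+1,n_2,\dots,n_k)$ again satisfies $n_1+1\geq n_2\geq\dots\geq n_k$, so $\boldsymbol{p}\in\mathcal{D}_k$, and since $\boldsymbol{n}$ and $\boldsymbol{p}$ first disagree at index $1$ with $n_1<n_1+1$ we get $\boldsymbol{n}<_k\boldsymbol{p}$.

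The main content, and the step I expect to require the most care, is the bounded-maximum claim. Let $S\subseteq\mathcal{D}_k$ be nonempty with an upper bound $\boldsymbol{b}$. The key realization is that boundedness is needed only to control the first coordinate: if some $\boldsymbol{m}\in S$ had $m_1>b_1$ then $\boldsymbol{b}<_k\boldsymbol{m}$ (disagreement first at index $1$), contradicting $\boldsymbol{m}\leq_k\boldsymbol{b}$; hence $\{m_1 : \boldsymbol{m}\in S\}$ is a nonempty subset of $\{0,\dots,b_1\}$ and possesses a maximum $a_1$. I would then run the peeling argument dualized to maxima, setting $a_{j}=\max\{m_{j}:\boldsymbol{m}\in S_{j-1}\}$. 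Here no further use of $\boldsymbol{b}$ is required: once $m_1=a_1$ is fixed, the defining constraint $n_1\geq\dots\geq n_k$ of $\mathcal{D}_k$ forces every later coordinate of an element of $S_{j-1}$ to lie in $\{0,\dots,a_1\}$, so each maximum exists. After $k$ steps $S_k=\{\boldsymbol{a}\}$ with $\boldsymbol{a}=(a_1,\dots,a_k)$, and the same first-disagreement check as before, now yielding $m_j<a_j$ at the first index where $\boldsymbol{m}$ and $\boldsymbol{a}$ differ, shows $\boldsymbol{a}=\max S$. The only subtlety to watch is the base of the peeling: the upper bound is precisely what makes the first coordinate finite-ranged, after which the geometry of $\mathcal{D}_k$ supplies all remaining bounds for free.
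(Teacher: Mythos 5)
Your proposal is correct and follows essentially the same route as the paper: the paper's proof uses exactly your coordinate-peeling construction ($A_0=A$, $A_i=\{\boldsymbol{n}\in A_{i-1}\mid n_i=m_i\}$ with $m_i$ the coordinatewise minimum) for well-ordering, dualizes it with $\max$ for the bounded-maximum claim, and exhibits a strictly larger element (there $(m_1+1,0,\dots,0)$, for you the equally valid $(n_1+1,n_2,\dots,n_k)$) to rule out a maximum. Your only addition is to spell out a detail the paper's ``just replace min with max'' glosses over --- that the upper bound controls the first coordinate and the monotonicity constraint of $\mathcal{D}_k$ then bounds all later ones --- which is a genuine improvement in rigor but not a different argument.
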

\begin{proof}
See \cite[p.~82]{HJ99} for a proof that $<_k$ is irreflexive, transitive and total. The induced order $<_k \cap \,(\mathcal{D}_k \times \mathcal{D}_k)$ inherits these properties. Given a nonempty $A \subseteq \mathcal{D}_k$, define $\boldsymbol{m}$ by $m_i = \min \{ n_i \mid \boldsymbol{n} \in A_{i-1}\}$, where $A_0 = A$ and $A_i = \{ \boldsymbol{n} \in A_{i-1} \mid n_i = m_i\}$, for all $i \in P_k$. It is an easy exercise to verify that $\boldsymbol{m}$ is well-defined and it is the $<_k$-minimum of $A$. The maximum of a nonempty $A \subseteq \mathcal{D}_k$ with an upper bound is determined analogously (just replace min with max). Finally, $(\mathcal{D}_k, <_k)$ doesn't have a maximum because for all $\boldsymbol{m} \in \mathcal{D}_k$, there is a $\boldsymbol{n} = (m_1+1, 0 \dots, 0) \in \mathcal{D}_k$ such that $\boldsymbol{m} <_k \boldsymbol{n}$.
\end{proof}

Supported by Proposition \ref{T:LexOrd}, we define a function $f_k: \mathbb{N} \to \mathcal{D}_k$ by
\begin{equation}\label{E:Deffk}
\left\{ \begin{aligned} f_k(0) &= \min \mathcal{D}_k \\ \forall x\in \mathbb{N}, \ f_k(x+1) &= \min \{ \boldsymbol{n} \in \mathcal{D}_k \mid f_k(x) <_k \boldsymbol{n} \}.\end{aligned}\right.
\end{equation}

\begin{proposition}\label{T:fkBijection}
$f_k$ is one-to-one onto $\mathcal{D}_k$.
\end{proposition}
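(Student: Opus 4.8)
The plan is to first confirm that $f_k$ is well-defined, then establish injectivity via monotonicity, and finally prove surjectivity by a minimal-counterexample argument that leans on the full strength of Proposition~\ref{T:LexOrd}.

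For well-definedness, I would observe that $\min \mathcal{D}_k$ exists because $(\mathcal{D}_k, <_k)$ is well-ordered, so $f_k(0)$ makes sense. For the recursive clause, the set $\{\boldsymbol{n} \in \mathcal{D}_k \mid f_k(x) <_k \boldsymbol{n}\}$ is nonempty precisely because $\mathcal{D}_k$ has no maximum, and, being a nonempty subset of a well-ordered set, it has a minimum; hence $f_k(x+1)$ is defined for every $x$. A routine induction then yields $f_k(x) <_k f_k(x+1)$ for all $x$, and transitivity of $<_k$ upgrades this to strict monotonicity: $x < y \Rightarrow f_k(x) <_k f_k(y)$. Since $<_k$ is irreflexive, strict monotonicity forces $f_k$ to be injective.

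For surjectivity I would argue by contradiction. Suppose the complement $B = \mathcal{D}_k \setminus \mathrm{Im}(f_k)$ is nonempty. By well-ordering it has a $<_k$-minimum $\boldsymbol{b}$; note $\boldsymbol{b} \neq \min \mathcal{D}_k = f_k(0)$, so the set $\{\boldsymbol{n} \in \mathcal{D}_k \mid \boldsymbol{n} <_k \boldsymbol{b}\}$ is nonempty and bounded above by $\boldsymbol{b}$. Here is the crucial step: by the last clause of Proposition~\ref{T:LexOrd}, this bounded set has a $<_k$-maximum $\boldsymbol{a}$, that is, $\boldsymbol{b}$ has an immediate predecessor. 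Since $\boldsymbol{a} <_k \boldsymbol{b}$ and $\boldsymbol{b}$ is the least element outside the image, $\boldsymbol{a} = f_k(x)$ for some $x \in \mathbb{N}$. I would then check that $\boldsymbol{b}$ is exactly $\min\{\boldsymbol{n} \in \mathcal{D}_k \mid f_k(x) <_k \boldsymbol{n}\}$: any $\boldsymbol{n}$ with $\boldsymbol{a} <_k \boldsymbol{n}$ cannot satisfy $\boldsymbol{n} <_k \boldsymbol{b}$ (otherwise $\boldsymbol{n} \leq_k \boldsymbol{a}$ by maximality of $\boldsymbol{a}$, contradicting $\boldsymbol{a} <_k \boldsymbol{n}$), so $\boldsymbol{b} \leq_k \boldsymbol{n}$; and $\boldsymbol{b}$ itself lies in the set. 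Hence $f_k(x+1) = \boldsymbol{b}$, contradicting $\boldsymbol{b} \notin \mathrm{Im}(f_k)$, and therefore $B = \varnothing$.

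The main obstacle is the surjectivity argument, and specifically the demand that every non-minimal element of $\mathcal{D}_k$ possess an immediate predecessor. Plain well-ordering does not guarantee this — ``limit'' elements with no immediate predecessor are precisely what would obstruct the surjectivity of a successor-defined enumeration — which is why the extra property isolated in Proposition~\ref{T:LexOrd} (every nonempty bounded subset attains its maximum) is indispensable rather than decorative. Since that proposition already secures the property for $(\mathcal{D}_k, <_k)$, the only remaining work here is the careful bookkeeping verifying that the immediate predecessor $\boldsymbol{a}$ sits at the index one below $\boldsymbol{b}$.
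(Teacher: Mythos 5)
Your proof is correct and takes essentially the same approach as the paper: the paper's own proof is merely the instruction to mimic Theorem~3.4 of \cite{HJ99} using Proposition~\ref{T:LexOrd}, and that mimicry is exactly your argument (injectivity from strict monotonicity, surjectivity from the immediate-predecessor argument supplied by the bounded-maximum clause). You have simply written out in full the details that the paper delegates to the reference.
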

\begin{proof}
Use Proposition \ref{T:LexOrd} mimicking the proof of Theorem~3.4 of \cite{HJ99}.
\end{proof}

Let $\Phi_k: \mathcal{D}_k \to \mathbb{N}$ be defined by
\begin{equation}\label{E:DefPhik}
\Phi_k(\boldsymbol{n}) = \sum_{i=1}^{k}\binom{k-i+n_i}{k-i+1},
\end{equation}
where the binomial coefficients $\binom{x}{y}$ are generalized for $x,y \in \mathbb{Z}$ (see \cite[\S 2.3.2]{RMG00}):
\[ \binom{x}{y} = \begin{cases} (-1)^y\binom{y-x-1}{y} &\text{if $y\geq0$ and $x < 0$,}\\ 0 &\text{if $y < 0$ or $x<y$.} \end{cases} \]
We now prove that $\Phi_k$ is the inverse function of $f_k$.

\begin{lemma}\label{T:fkLemma}
Given $\boldsymbol{m} \in \mathcal{D}_k$ and $x \in \mathbb{N}$, suppose that $f_k(x) = \boldsymbol{m}$. Let $m_0$ denote $m_1 + 1$. If $r \in P_k$ is such that $m_{r-1} > m_{r}$ and $m_i = m_{r}$ for all $i \in \{r, \dots, k\}$, then $f_k(x+1) = (m_1, \dots, m_{r-1}, m_{r}+1, 0, \dots, 0)$. In particular, when the coordinates of $\boldsymbol{m}$ are all equal, $f_k(x+1) = (m_1+1, 0, \dots, 0)$.
\end{lemma}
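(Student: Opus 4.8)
The plan is to show that the explicit vector $\boldsymbol{n} = (m_1, \dots, m_{r-1}, m_r+1, 0, \dots, 0)$ is exactly the $<_k$-minimum of the set $S = \{ \boldsymbol{p} \in \mathcal{D}_k \mid \boldsymbol{m} <_k \boldsymbol{p} \}$, since by the recursive definition \eqref{E:Deffk} this minimum is precisely $f_k(x+1)$. I would split the argument into three parts: (i) $\boldsymbol{n} \in \mathcal{D}_k$; (ii) $\boldsymbol{m} <_k \boldsymbol{n}$, so that $\boldsymbol{n} \in S$; and (iii) $\boldsymbol{n}$ is a lower bound for $S$, i.e. every $\boldsymbol{p} \in S$ satisfies $\boldsymbol{p} = \boldsymbol{n}$ or $\boldsymbol{n} <_k \boldsymbol{p}$. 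Together these give $\boldsymbol{n} = \min S$.

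Parts (i) and (ii) are short. For (i), the hypothesis $m_{r-1} > m_r$ yields $m_{r-1} \geq m_r + 1 \geq 0$, and the appended zeros are harmless, so the entries of $\boldsymbol{n}$ remain weakly decreasing. For (ii), $\boldsymbol{m}$ and $\boldsymbol{n}$ agree on the first $r-1$ coordinates while $m_r < m_r + 1 = n_r$, so $r = \min \mathrm{Diff}(\boldsymbol{m}, \boldsymbol{n})$ with $m_r < n_r$, which is exactly \eqref{E:DefLexOrd}.

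The substance of the argument is (iii). Fix $\boldsymbol{p} \in S$ and put $j = \min \mathrm{Diff}(\boldsymbol{m}, \boldsymbol{p})$, so that $p_i = m_i$ for $i < j$ and $m_j < p_j$. The first key step is to show $j \leq r$: if instead $j > r$, then $p_{j-1} = m_{j-1} = m_r$ (because the coordinates $r, \dots, k$ of $\boldsymbol{m}$ all equal $m_r$), while $p_j > m_j = m_r$, contradicting the monotonicity $p_{j-1} \geq p_j$ forced by $\boldsymbol{p} \in \mathcal{D}_k$. This use of $\boldsymbol{p} \in \mathcal{D}_k$ is the crux of the whole proof. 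Once $j \leq r$ is in hand, I would compare $\boldsymbol{n}$ with $\boldsymbol{p}$ coordinatewise: when $j < r$ the two agree up to index $j$ and $n_j = m_j < p_j$, giving $\boldsymbol{n} <_k \boldsymbol{p}$ immediately; when $j = r$ and $p_r > m_r + 1 = n_r$ the same reasoning applies at index $r$; and when $j = r$ with $p_r = m_r + 1 = n_r$ the vectors agree through index $r$, after which $\boldsymbol{n}$ carries only zeros, so either $\boldsymbol{p} = \boldsymbol{n}$ or the first index beyond $r$ at which $\boldsymbol{p}$ is nonzero witnesses $\boldsymbol{n} <_k \boldsymbol{p}$.

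Assembling these, $\boldsymbol{n}$ is a member of $S$ lying $<_k$-below every element of $S$, hence $\boldsymbol{n} = \min S = f_k(x+1)$. The ``in particular'' case is the instance $r = 1$ forced by the convention $m_0 = m_1 + 1 > m_1$, for which $\boldsymbol{n} = (m_1+1, 0, \dots, 0)$. I expect the only delicate points to be the bound $j \leq r$ and the bookkeeping in the subcase $p_r = m_r + 1$, where the trailing zeros of $\boldsymbol{n}$ must be matched against the arbitrary nonnegative tail of $\boldsymbol{p}$.
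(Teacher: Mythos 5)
Your proposal is correct and takes essentially the same route as the paper: exhibit the candidate $(m_1, \dots, m_{r-1}, m_r+1, 0, \dots, 0)$, verify it lies in $\{\boldsymbol{n} \in \mathcal{D}_k \mid \boldsymbol{m} <_k \boldsymbol{n}\}$, and establish minimality by pinning the first index of difference to $r$, using the weak monotonicity of the competing tuple together with the constant tail of $\boldsymbol{m}$. The only difference is cosmetic: you argue directly that the candidate is a lower bound (comparing competitors against $\boldsymbol{m}$), whereas the paper supposes a strictly smaller element of the set and derives a contradiction (comparing against the candidate) --- the same first-difference case analysis in contrapositive form.
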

\begin{proof}
Let $A$ be the set $\{ \boldsymbol{n} \in \mathcal{D}_k \mid \boldsymbol{m} <_k \boldsymbol{n} \}$, so that $f_k(x+1) = \min A$. We want to prove that $\min A = \boldsymbol{p}$, where $\boldsymbol{p} = (m_1, \dots, m_{r-1}, m_{r}+1, 0, \dots, 0)$. It is easy to note that $\boldsymbol{p} \in A$. So, since $<_k$ is a \emph{linear} order, it remains only to show that $\boldsymbol{p}$ is minimal. Suppose towards a contradiction that $\boldsymbol{n} <_k \boldsymbol{p}$, for some $\boldsymbol{n} \in A$. Let $j = \min \mathrm{Diff}(\boldsymbol{n},\boldsymbol{p})$, so that $n_j < p_j$. Since $p_{r+1} = \dots = p_k = 0$, $j \leq r$. But $j \geq r$, otherwise we would have $\min \mathrm{Diff}(\boldsymbol{m},\boldsymbol{n}) = j$ and, hence, $m_j < n_j < p_j = m_j$ (because $m_i = p_i$ for all $i \in \{1, \dots, r-1\}$). So, $r = j$, whence $n_r < p_r = m_r +1$ and $n_i = p_i = m_i$, for all $i \in \{1, \dots, r-1\}$. Since $\boldsymbol{n} \in \mathcal{D}_k$ and $m_r = \dots = m_k$, it follows that $n_i \geq m_i$ for all $i \in P_k$ (see Figure \ref{F:mnDiagram}). But this contradicts the hypothesis that $\boldsymbol{m} <_k \boldsymbol{n}$. Thus, $\boldsymbol{p}$ is minimal.
\end{proof}

\begin{figure}[here]
\centering
\includegraphics[scale=1]{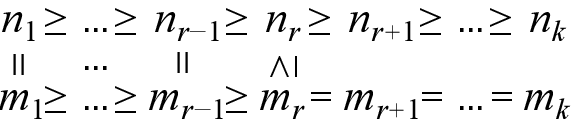}
\caption{$m_i \geq n_i$, for all $i \in P_k$.}
\label{F:mnDiagram}
\end{figure}

\begin{lemma} \label{T:PhikCircfkEqualsId}
$(\Phi_k \circ f_k)(0) = 0$ and $(\Phi_k \circ f_k)(x+1) = (\Phi_k \circ f_k)(x) + 1$, for all $x \in \mathbb{N}$. Thus, $\Phi_k \circ f_k$ is $\mathrm{id}_{\mathbb{N}}$, the identity function on $\mathbb{N}$.
\end{lemma}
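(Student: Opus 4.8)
The plan is to establish the two displayed recurrences and then obtain $\Phi_k \circ f_k = \mathrm{id}_{\mathbb{N}}$ by a routine induction on $x$. For the base case, I would first observe that $\min \mathcal{D}_k = (0, \dots, 0)$: lexicographic minimality forces $n_1 = 0$, and the constraint $n_1 \geq \dots \geq n_k$ then forces every coordinate to vanish. Evaluating $\Phi_k$ there gives $\sum_{i=1}^{k} \binom{k-i}{k-i+1}$, and each summand is $0$ because its upper index $k-i$ is strictly smaller than its lower index $k-i+1$; hence $(\Phi_k \circ f_k)(0) = \Phi_k(0,\dots,0) = 0$.

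The heart of the argument is the increment relation. Writing $\boldsymbol{m} = f_k(x)$, I would first locate the index $r \in P_k$ demanded by Lemma~\ref{T:fkLemma}: since $\boldsymbol{m} \in \mathcal{D}_k$ has weakly decreasing coordinates and $m_0 = m_1 + 1 > m_k$, there is a well-defined smallest index $r$ with $m_r = m_k$, and it satisfies $m_{r-1} > m_r = m_{r+1} = \dots = m_k =: s$. Lemma~\ref{T:fkLemma} then identifies $f_k(x+1) = \boldsymbol{p}$ with $p_i = m_i$ for $i < r$, $p_r = s+1$, and $p_i = 0$ for $i > r$. Subtracting $\Phi_k(\boldsymbol{m})$ from $\Phi_k(\boldsymbol{p})$ termwise, the first $r-1$ terms cancel, the terms with $i > r$ lose their $\boldsymbol{p}$-contribution (again because $\binom{k-i}{k-i+1} = 0$), and what survives is
\[ \Phi_k(\boldsymbol{p}) - \Phi_k(\boldsymbol{m}) = \binom{k-r+s+1}{k-r+1} - \binom{k-r+s}{k-r+1} - \sum_{i=r+1}^{k} \binom{k-i+s}{k-i+1}. \]

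To finish I would reduce this expression to $1$. Applying Pascal's rule to the leading difference collapses it to $\binom{k-r+s}{k-r}$; then, setting $d = k-r$ and re-indexing the sum by $j = k-i$, the surviving quantity becomes $\binom{d+s}{d} - \sum_{j=0}^{d-1}\binom{j+s}{j+1}$. The decisive observation is that Pascal's rule rewrites each summand as a telescoping difference, $\binom{j+s}{j+1} = \binom{j+1+s}{j+1} - \binom{j+s}{j}$, so the sum collapses to $\binom{d+s}{d} - \binom{s}{0} = \binom{d+s}{d} - 1$, leaving the total difference equal to $1$. I expect the main subtlety to be bookkeeping with the generalized binomials: the cancellations for $i>r$ and in the base case genuinely rely on the convention $\binom{x}{y}=0$ for $x<y$, whereas in the telescoping step one must note that $s = m_r \geq 0$ and $d = k-r \geq 0$, so those coefficients are ordinary nonnegative-index binomials and Pascal's rule applies directly. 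With both recurrences in hand, induction on $x$ yields $(\Phi_k \circ f_k)(x) = x$ for every $x \in \mathbb{N}$, that is, $\Phi_k \circ f_k = \mathrm{id}_{\mathbb{N}}$.
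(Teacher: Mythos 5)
Your proof is correct and follows essentially the same route as the paper: both reduce the increment step to Lemma~\ref{T:fkLemma}, cancel the common terms of $\Phi_k(\boldsymbol{p})$ and $\Phi_k(\boldsymbol{m})$ (using $\binom{k-i}{k-i+1}=0$ for the trailing zeros), and verify that the surviving binomial expression equals $1$. The only difference is cosmetic: where the paper invokes the parallel summation identity \eqref{E:ParallelSummation} with a change of index, you re-derive that identity on the spot via Pascal's rule and telescoping, and you conclude by ordinary induction where the paper cites the uniqueness part of the recursion theorem.
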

\begin{proof}
$(0, \dots, 0)$ is clearly the minimum of $(\mathcal{D}_k, <_k)$ (in fact, it is the minimum of $(\mathbb{N}^k, <_k)$) and $\Phi_k(0, \dots, 0) = 0$. Hence, $(\Phi_k \circ f_k)(0) = 0$.

Given $x \in \mathbb{N}$, let $\boldsymbol{m}$, $\boldsymbol{p}$ and $m_0$ denote $f_k(x)$, $f_k(x+1)$ and $m_1 + 1$, respectively. There is an unique $r \in P_k$ such that $m_{r-1} > m_{r}$ and $m_i = m_{r}$ for all $i \in \{r, \dots, k\}$. By Lemma \ref{T:fkLemma}, $\boldsymbol{p} = (m_1, \dots, m_{r-1}, m_{r}+1, 0, \dots, 0)$. So, by \eqref{E:DefPhik},
\begin{align}
\Phi_k(\boldsymbol{p}) &= \sum_{i = 1}^{r-1} \binom{k-i+m_i}{k-i+1} + \binom{k-r+m_r+1}{k-r+1}, \label{E:Phikp} \\
\Phi_k(\boldsymbol{m}) &= \sum_{i = 1}^{r-1} \binom{k-i+m_i}{k-i+1} + \sum_{i=r}^k\binom{k-i+m_r}{k-i+1}. \label{E:Phikm}
\end{align}
On the other hand, by the parallel summation identity (see \cite[\S 2.3.4]{RMG00}),
\begin{equation}\label{E:ParallelSummation}
 \forall x,y \in \mathbb{Z}, \quad \sum_{i=0}^{y}\binom{i+x-1}{i} = \binom{x+y}{y}.
\end{equation}
Applying \eqref{E:ParallelSummation} for $x = m_r$ and $y = k-r+1$, we obtain an equation which can be modified by a changing of index (namely, replacing $i$ with $k-i+1$) into
\begin{equation}\label{E:AuxForPhikpEqualsPhikmPlus1}
 \binom{m_r+k-r+1}{k-r+1} = \sum_{i=r}^{k} \binom{k-i+m_r}{k-i+1} + 1.
\end{equation}
By \eqref{E:Phikp}, \eqref{E:Phikm} and \eqref{E:AuxForPhikpEqualsPhikmPlus1}, it results that $\Phi_k(\boldsymbol{p}) = \Phi_k(\boldsymbol{m}) +1$. That is, $(\Phi_k \circ f_k)(x+1) = (\Phi_k \circ f_k)(x) + 1$.

The last assertion follows from the uniqueness part of the recursion theorem (see \cite[p.~53]{YM07}), since $\mathrm{id}_{\mathbb{N}}(0) = 0$ and $\mathrm{id}_{\mathbb{N}}(x+1) = \mathrm{id}_{\mathbb{N}}(x)+1$ for all $x \in \mathbb{N}$.
\end{proof}

\begin{proposition}
$\Phi_k$ is the inverse function of $f_k$.
\end{proposition}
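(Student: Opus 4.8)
The plan is to combine the two facts already in hand. By Proposition~\ref{T:fkBijection}, $f_k$ is one-to-one onto $\mathcal{D}_k$, hence a bijection, so it admits a genuine two-sided inverse $f_k^{-1}: \mathcal{D}_k \to \mathbb{N}$. By Lemma~\ref{T:PhikCircfkEqualsId}, $\Phi_k \circ f_k = \mathrm{id}_{\mathbb{N}}$. Since $f_k^{-1} \circ f_k = \mathrm{id}_{\mathbb{N}}$ as well, the whole proposition amounts to cancelling $f_k$ on the right in the equation $\Phi_k \circ f_k = f_k^{-1} \circ f_k$, which is legitimate precisely because $f_k$ is surjective.

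Concretely, I would argue pointwise. Let $\boldsymbol{n} \in \mathcal{D}_k$ be arbitrary. Surjectivity of $f_k$ supplies an $x \in \mathbb{N}$ with $f_k(x) = \boldsymbol{n}$, and then
\[
\Phi_k(\boldsymbol{n}) = \Phi_k\bigl(f_k(x)\bigr) = (\Phi_k \circ f_k)(x) = x = f_k^{-1}\bigl(f_k(x)\bigr) = f_k^{-1}(\boldsymbol{n}),
\]
where the third equality is Lemma~\ref{T:PhikCircfkEqualsId}. As $\boldsymbol{n}$ was arbitrary, this gives $\Phi_k = f_k^{-1}$, and consequently $\Phi_k$ is itself a bijection between $\mathcal{D}_k$ and $\mathbb{N}$ with $f_k \circ \Phi_k = \mathrm{id}_{\mathcal{D}_k}$ as well.

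I expect no real obstacle here; the proof is essentially a one-line composition argument. The only point worth flagging explicitly is why Lemma~\ref{T:PhikCircfkEqualsId} alone does not already finish the job: the single identity $\Phi_k \circ f_k = \mathrm{id}_{\mathbb{N}}$ only forces $f_k$ to be injective and $\Phi_k$ to be surjective, so by itself it does not pin down $\Phi_k$ as the inverse. It is the bijectivity of $f_k$ from Proposition~\ref{T:fkBijection} that upgrades the left-inverse relation to a genuine two-sided inverse, and I would make sure to invoke that proposition by name so the logic is transparent.
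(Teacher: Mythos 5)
Your proposal is correct and matches the paper's argument in all essentials: both proofs combine Proposition~\ref{T:fkBijection} with Lemma~\ref{T:PhikCircfkEqualsId}, the paper via the one-line composition chain $\Phi_k = \Phi_k \circ (f_k \circ f_k^{-1}) = (\Phi_k \circ f_k) \circ f_k^{-1} = f_k^{-1}$ and you via the pointwise version of the same cancellation. Your closing remark --- that the left-inverse identity alone only yields injectivity of $f_k$ and surjectivity of $\Phi_k$, so bijectivity of $f_k$ is genuinely needed --- is accurate and a worthwhile clarification, but the underlying route is identical.
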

\begin{proof}
By Proposition \ref{T:fkBijection}, $f_k$ is invertible. So, by Lemma \ref{T:PhikCircfkEqualsId}, $\Phi_k = \Phi_k \circ \mathrm{id}_{\mathbb{N}} = \Phi_k \circ (f_k \circ f_k^{-1}) = (\Phi_k \circ f_k) \circ f_k^{-1} =  \mathrm{id}_{\mathbb{N}} \circ  f_k^{-1} = f_k^{-1}$.
\end{proof}

\begin{corolary}
$h_k \circ f_k: \mathbb{N} \to \mathbb{N}^k$ has an inverse $\Psi_k: \mathbb{N}^k \to \mathbb{N}$ given by
\begin{equation}\label{E:DefPsik}
\Psi_k(\boldsymbol{n}) = \sum_{i=1}^{k}\binom{i-1+n_1+ \dots + n_i}{i}.
\end{equation}
\end{corolary}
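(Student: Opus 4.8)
The plan is to avoid verifying the bijectivity of $\Psi_k$ from scratch and instead exploit that $h_k \circ f_k$ is built from two maps already known to be bijective. By~\eqref{E:Defhk}, $h_k$ is a bijection between $\mathcal{D}_k$ and $\mathbb{N}^k$, and by the preceding proposition $f_k^{-1} = \Phi_k$; hence the composite $h_k \circ f_k: \mathbb{N} \to \mathbb{N}^k$ is a bijection and its inverse is $(h_k \circ f_k)^{-1} = f_k^{-1} \circ h_k^{-1} = \Phi_k \circ h_k^{-1}$. It therefore suffices to establish the purely formal identity $\Phi_k \circ h_k^{-1} = \Psi_k$.

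First I would compute $h_k^{-1}$ in closed form. Writing $\boldsymbol{n} = h_k^{-1}(\boldsymbol{m})$ and inverting the telescoping definition $h_k(\boldsymbol{n}) = (n_k, n_{k-1}-n_k, \dots, n_1-n_2)$, one recovers
\[
n_j = m_1 + m_2 + \dots + m_{k-j+1}, \qquad j \in P_k,
\]
equivalently $n_{k-i+1} = m_1 + \dots + m_i$. These partial sums are nonincreasing in $j$ (as $j$ grows, fewer nonnegative terms are summed), so $\boldsymbol{n} \in \mathcal{D}_k$ as required, confirming that this really is the inverse of $h_k$.

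Then I would substitute this expression into the formula~\eqref{E:DefPhik} for $\Phi_k$, obtaining
\[
\Phi_k\bigl(h_k^{-1}(\boldsymbol{m})\bigr) = \sum_{i=1}^{k}\binom{k-i+m_1+\dots+m_{k-i+1}}{k-i+1}.
\]
Performing the change of index $s = k-i+1$ (so that $s$ ranges over $P_k$ and $k-i = s-1$) turns this into $\sum_{s=1}^{k}\binom{s-1+m_1+\dots+m_s}{s}$, which is exactly $\Psi_k(\boldsymbol{m})$ as defined in~\eqref{E:DefPsik}. This completes the identification $\Phi_k \circ h_k^{-1} = \Psi_k$. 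The argument presents no genuine obstacle; the only points demanding care are the bookkeeping in inverting $h_k$ and the matching of summation indices in the final reindexing, both of which are routine telescoping computations.
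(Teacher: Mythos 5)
Your proposal is correct and follows essentially the same route as the paper: both identify $(h_k \circ f_k)^{-1} = \Phi_k \circ h_k^{-1}$, compute the coordinates of $h_k^{-1}$ as the telescoping partial sums $m_1 + \dots + m_{k-j+1}$, substitute into \eqref{E:DefPhik}, and reindex (the paper's ``reversing the order of the summands'' is your substitution $s = k-i+1$). Your write-up merely makes explicit two details the paper leaves implicit, namely the verification that $h_k^{-1}(\boldsymbol{m}) \in \mathcal{D}_k$ and the index bookkeeping.
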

\begin{proof}
For all $\boldsymbol{n} \in \mathbb{N}^k$, the $i$-th coordinate of $h_k^{-1}(\boldsymbol{n})$ is $\sum_{j=1}^{k-i+1}n_j$. Moreover, since $h_k$ and $f_k$ are invertible and $f_k^{-1} = \Phi_k$, $\Psi_k = \Phi_k \circ h_k^{-1}$ is the inverse of $h_k \circ f_k$. So, $h_k \circ f_k$ has an inverse function $\Psi_k$ given by
\[ \Psi_k(\boldsymbol{n}) = \sum_{i=1}^{k}\binom{k-i+\sum_{j=1}^{k-i+1}n_j}{k-i+1} = \sum_{i=1}^{k}\binom{i-1+n_1+ \dots + n_i}{i}.  \]
The last equality is obtained by reversing the order of the summands.
\end{proof}

\section{Combinatorial remarks}

So far, we have not explained where the formula \eqref{E:DefPhik} came from. It can be deduced combinatorially after noticing that $f_k^{-1}(\boldsymbol{n})$ is the cardinality of $\mathcal{D}_k[\boldsymbol{n}] = \{\boldsymbol{m} \in \mathcal{D}_k \mid \boldsymbol{m} <_k \boldsymbol{n} \}$, which is the disjoint union of the family $\{\mathcal{D}_k[\boldsymbol{n};i]\}_{i\in P_k}$, where $\mathcal{D}_k[\boldsymbol{n};i] = \{ \boldsymbol{m} \in \mathcal{D}_k \mid m_i < n_i \text{ and } i = \min \mathrm{Diff}(\boldsymbol{m},\boldsymbol{n})\}$. Given $i \in P_k$, the mapping $g_k[\boldsymbol{n};i]: \boldsymbol{m} \mapsto (m_i, \dots, m_k)$, defined on $\mathcal{D}_k[\boldsymbol{n};i]$, is one-to-one onto $\mathcal{D}_{k-i+1}$ and $h_{k-i+1} \circ g_k[\boldsymbol{n};i]$ sends $\mathcal{D}_k[\boldsymbol{n};i]$ biunivocally to
\begin{align}
\mathrm{Im}\,(h_{k-i+1} \circ g_k[\boldsymbol{n};i]) &= \left\{\boldsymbol{p} \in \mathbb{N}^{k-i+1} \mid p_1 + \dots + p_{k-i+1} < n_i \right\} \\
&= \bigsqcup_{j = 0}^{n_i-1}\left\{\boldsymbol{p} \in \mathbb{N}^{k-i+1} \mid p_1 + \dots + p_{k-i+1} = j \right\},\notag
\end{align}
where $\bigsqcup$ denotes disjoint union. But, according \cite[\S 2.3.3]{RMG00},
\begin{equation}
\forall j \in \mathbb{Z}, \quad |\left\{\boldsymbol{p} \in \mathbb{N}^{k-i+1} \mid p_1 + \dots + p_{k-i+1} = j \right\}| = \binom{k-i+j}{j}.
\end{equation}
So, by \eqref{E:ParallelSummation},
\begin{equation}
|\mathrm{Im}\,(h_{k-i+1} \circ g_k[\boldsymbol{n};i])| = \sum_{j = 0}^{n_i-1} \binom{k-i+j}{j} = \binom{k-i+n_i}{k-i+1}.
\end{equation}
Now, by the foregoing considerations,
\begin{equation}
|\mathcal{D}_k[\boldsymbol{n}]| = \sum_{i=1}^{k}|\mathcal{D}_k[\boldsymbol{n};i]| \quad \text{ and } \quad |\mathrm{Im}\,(h_{k-r+1} \circ g_k[\boldsymbol{n};r])| = |\mathcal{D}_k[\boldsymbol{n};r]|,
\end{equation}
for all $r \in P_k$. Thus,
\begin{equation}
 |\mathcal{D}_k[\boldsymbol{n}]| = \sum_{i=1}^{k}\binom{k-i+n_i}{k-i+1}.
\end{equation}

To finish with, we prove a theorem that confirms that $\boldsymbol{n} \mapsto |\mathcal{D}_k[\boldsymbol{n}]|$ is a bijection between $\mathcal{D}_k$ and $\mathbb{N}$. It may be useful for constructing other explicitly defined injections onto $\mathbb{N}$.

\begin{theorem}\label{T:Criterion}
Let $(A,\prec)$ be a linearly ordered set. If $A$ is infinite and, for all $a \in A$, the set $\{ x \in A \,|\, x \prec a \}$ is finite, then the function $\Psi : A \to \mathbb{N}$ given by $\Psi(a) = |\{x \in A \, | \, x \prec a\}|$ is a bijection between $A$ and $\mathbb{N}$.
\end{theorem}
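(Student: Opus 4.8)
The plan is to show that $\Psi$ is simultaneously strictly order-preserving (hence injective) and surjective, with the finiteness of the initial segments $A_a = \{x \in A \mid x \prec a\}$ doing the essential work in both halves; note that $\Psi(a) = |A_a|$ takes values in $\mathbb{N}$ precisely because each $A_a$ is finite by hypothesis. For injectivity I would prove that $a \prec b$ forces $\Psi(a) < \Psi(b)$. By transitivity of $\prec$ every $x \prec a$ also satisfies $x \prec b$, so $A_a \subseteq A_b$, while $a \in A_b \setminus A_a$ by irreflexivity, so the inclusion is proper; since $A_b$ is finite, a proper subset has strictly smaller cardinality. As $\prec$ is linear, any two distinct elements are comparable, so this strict monotonicity yields injectivity at once.

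For surjectivity I would first note that $(A, \prec)$ is in fact well-ordered. Given a nonempty $S \subseteq A$, choose any $a \in S$; the set $S \cap (A_a \cup \{a\})$ is finite and nonempty, so it has a $\prec$-least element $m$, and trichotomy shows $m = \min S$ (each $s \in S$ either lies in $A_a \cup \{a\}$, giving $m \preceq s$, or satisfies $a \prec s$, giving $m \preceq a \prec s$). I would then prove by induction that every $n \in \mathbb{N}$ lies in the image of $\Psi$. The base case uses $\min A$, whose initial segment is empty, so $\Psi(\min A) = 0$. For the inductive step, if $\Psi(a) = n$ then the strict upper set $\{x \in A \mid a \prec x\}$ is nonempty---this is exactly where infinitude of $A$ enters, since $A_a \cup \{a\}$ is finite but $A$ is not---and by well-ordering it has a least element $s(a)$. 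One checks that $A_{s(a)} = A_a \sqcup \{a\}$, so $\Psi(s(a)) = n + 1$, and the induction closes.

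The main obstacle is the surjectivity half and the way the two hypotheses cooperate there. Finiteness of the initial segments is what promotes the linear order to a well-order and lets the successor $s(a)$ be defined, while infinitude of $A$ is what keeps every upper set nonempty, so that the induction never stalls and the image is unbounded. The one genuinely delicate point is the identity $A_{s(a)} = A_a \cup \{a\}$---that stepping to the successor increases the count by exactly one---but it follows directly from the minimality of $s(a)$, which rules out any element lying strictly between $a$ and $s(a)$.
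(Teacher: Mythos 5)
Your proof is correct, and while your injectivity half coincides with the paper's (proper inclusion of finite initial segments gives strict monotonicity, and linearity upgrades that to injectivity), your surjectivity argument takes a genuinely different route. The paper argues by contradiction, working \emph{downward}: assuming some $r \notin \mathrm{Im}\,\Psi$, it takes the least $w \in \mathrm{Im}\,\Psi$ with $r < w$, sets $\alpha = \Psi^{-1}(w)$, and passes to the predecessor $\beta = \max\{x \in A \mid x \prec \alpha\}$ (a maximum of a finite nonempty set) to conclude $w-1 = \Psi(\beta) \in \mathrm{Im}\,\Psi$ and hence $w-1 \in B$, contradicting the minimality of $w$. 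You instead work \emph{upward}: you first promote $(A,\prec)$ to a well-order --- correctly, by intersecting a nonempty $S$ with the finite set $A_a \cup \{a\}$ for any $a \in S$ and invoking trichotomy --- then define the successor $s(a) = \min\{x \in A \mid a \prec x\}$, whose domain never empties precisely because $A$ is infinite while $A_a \cup \{a\}$ is finite, verify the key identity $A_{s(a)} = A_a \cup \{a\}$ from the minimality of $s(a)$, and induct on $n$. Both uses of the hypotheses are sound in each version, so this is a matter of trade-offs rather than correctness. Your route is somewhat longer but more constructive, and it buys a structural insight the paper's contradiction argument hides: your successor map is exactly the general pattern the paper instantiates concretely for $\mathcal{D}_k$ in the recursion \eqref{E:Deffk} and in Lemma \ref{T:fkLemma}, and your identity $\Psi(s(a)) = \Psi(a)+1$ is the abstract form of Lemma \ref{T:PhikCircfkEqualsId}; in effect you reprove Propositions \ref{T:LexOrd} and \ref{T:fkBijection} at this level of generality and exhibit $\Psi^{-1}$ as the enumeration $0 \mapsto \min A$, $n+1 \mapsto s(\Psi^{-1}(n))$. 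The paper's proof is shorter and avoids setting up the well-ordering and the recursion, at the cost of being less explicit about why the result holds.
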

\begin{proof}
Let $A[a]$ denote $\{x \in A \, | \, x \prec a \}$, for all $a \in A$. Given $a,b,x \in A$, with $a \prec b$, $x \prec a$ implies $x \prec b$ and $x \nprec a$ (because $\prec$ is irreflexive and transitive), so that $A[a]$ is strictly contained in $A[b]$. Since $A[a]$ and $A[b]$ are finite, it follows that  $\Psi(a) = |A[a]| < |A[b]| = \Psi(b)$. Thus, $\Psi$ is strictly growing. Since $\prec$ is total on $A$, it is easy to note that the strict growing of $\Psi$ ensures its injectivity. It remains to prove that $\mathrm{Im}\,\Psi = \mathbb{N}$, where $\mathrm{Im}\,\Psi$ denotes the image of $\Psi$.

Suppose, to get a contradiction, that some $r \in \mathbb{N}$ is not in $\mathrm{Im}\, \Psi$. Since $A$ is infinite and linearly ordered by $\prec$, there are $a_0,\dots,a_r\in A$, pairwise distinct, such that $a_0\prec\dots\prec a_r \, \therefore \, \Psi(a_0)<\dots<\Psi(a_r) \, \therefore \, r<\Psi(a_r)$. So, the set $B = \{y \in \mathrm{Im}\, \Psi \, | \, r < y \}$ is nonempty. Let $w = \min B$ and $\alpha = \Psi^{-1}(w)$. Since $0 \leq r < w$, $|A[\alpha]| = \Psi(\alpha) = w > 0$, whence $A[\alpha]$ is finite nonempty. Thus, exists $\beta = \max A[\alpha]$ such that $A[\alpha] = A[\beta]\cup\{\beta\}$ and $A[\beta]\cap\{\beta\} = \varnothing$. Then, $w = \Psi(\alpha) = |A[\alpha]| = |A[\beta]| + 1 = \Psi(\beta) + 1 \, \therefore \, w-1 = \Psi(\beta) \in \mathrm{Im}\, \Psi$. Moreover, $r \leq w - 1$, because $w \in B \therefore r < w$. But $r \notin \mathrm{Im}\, \Psi \ni w-1$, so that $r \neq w-1$. Hence, $r < w-1 \, \therefore \, w-1 \in B$, contradicting the minimality of $w$.
\end{proof}

\end{document}